\newtheorem{theorem}{Theorem}[section]
\theoremstyle{definition}
\theoremstyle{corollary}
\numberwithin{equation}{section}
\begin{document}

\title{Is the symmetric group Sperner?}

\author{Larry H. Harper}
\address{Department of Mathematics, University of California, Riverside, Riverside, CA 92521}
\email{harper@math.ucr.edu}

\author{Gene B. Kim}
\address{Department of Mathematics, University of Southern California, Los Angeles, CA 90089}
\email{genebkim@usc.edu}

\subjclass[2010]{Primary 05D05, 05E99}

\date{December 21, 2018}

\keywords{Sperner, symmetric group, refinement, absolute order, flow morphism}

\begin{abstract}
An \emph{antichain} $\mathcal{A}$ in a poset $\mathcal{P}$ is a subset of $\mathcal{P}$ in which no two elements are comparable. Sperner showed that the maximal antichain in the Boolean lattice, $\mathcal{B}_n = \left\{ 0 < 1 \right\}^n$, is the largest rank (of size $\binom{n}{\lfloor n/2 \rfloor}$). This type of problem has been since generalized, and a graded poset $\mathcal{P}$ is said to be \emph{Sperner} if the largest rank of $\mathcal{P}$ is its maximal antichain. In this paper, we will show that the symmetric group $S_n$, partially ordered by refinement (or by absolute order equivalently), is Sperner.
\end{abstract}

\maketitle

\section{Introduction}

	A \emph{partial order}, $\leq$, on $S$, is a reflexive, antisymmetric, and transitive binary relation, and a \emph{poset}, $\mathcal{P} = \left( P, \leq \right)$, consists of a set, $P$, and a partial order $\leq$ on $P$. A \emph{chain} is a poset in which every pair of elements is comparable.

	The \emph{height} of $\mathcal{P}$, $h(\mathcal{P})$, is the maximum height of a chain in $\mathcal{P}$. The Jordan-Dedekind chain condition for $\mathcal{P}$ is that all maximal chains in an interval $[x,y] = \left\{ \, z \in \mathcal{P} : x \leq z \leq y \, \right\}$ have the same height. If $\mathcal{P}$ is connected and satisfies this condition, we can define a \emph{rank} function: select any $x_0 \in \mathcal{P}$ and define $r(x_0) = 0$. For any $x \neq x_0$, $r(x)$ is uniquely determined by $x \leq y \Rightarrow r(y) = r(x) + 1$. A \emph{graded poset} is a poset equipped with a rank function. We can define the \emph{levels} $N_i = \left\{ x \in P | r(x) = i \right\}$.

An \emph{antichain}, $A$, in $\mathcal{P}$ is a subset in which no two elements lie on a chain. Given a weighted poset, $\mathcal{P} = ( P, \leq, \omega )$, the \emph{width} of $\mathcal{P}$, $w$, is the maximum weight of an antichain in $\mathcal{P}$. If $\mathcal{P}$ is not explicitly weighted, the weight is implicitly the counting measure.

Given $\mathcal{P}$, \emph{Sperner's problem} is to find the width of $\mathcal{P}$. In~\cite{Sperner}, Sperner shows that the width of the (unweighted) Boolean lattice, $\mathcal{B}_n = \left\{ 0 < 1 \right\}^n$, is $\binom{n}{\lfloor n/2 \rfloor}$, the largest binomial coefficient. For $0 \leq k \leq h(\mathcal{P})$, we can also define a \emph{$k$-antichain}, $A_k$, in $\mathcal{P}$ to be a subset in which no $k+1$ elements lie on a chain. In~\cite{Erdos}, Erd{\"o}s extended Sperner's problem to finding the \emph{$k$-width}, $w_k(\mathcal{P}) = \max \left\{ w(A_k) \right\}$ and showed that
\[
	w_k(\mathcal{B}_n) = \sum_{j=1}^k \binom{n}{\lfloor (n+j-1)/2 \rfloor},
\]
the sum of the $k$~largest binomial coefficients. In~\cite{Stanley}, Stanley used techniques from algebraic geometry to show that Weyl groups, under Bruhat order, are Sperner. Engel wrote a book~\cite{Engel} which presents Sperner theory from a unified point of view, bringing combinatorial techniques together with methods from programming, linear algebra, probability theory, and enumerative combinatorics.

In \cite{Rota}, Rota conjectured that $\Pi_n$, the poset of partitions of $\left\{ 1, 2, \dots, n \right\}$, ordered by refinement, is Sperner. The conjecture was disproved by Canfield in \cite{Canfield} by using Graham-Harper reduction (\cite{GrahamHarper}) and probability theory. Canfield and Harper, in \cite{CanfieldHarper}, went further, showing that the ratio of the size of the largest antichain to the size of the largest rank goes to infinity. Canfield, in~\cite{Canfield2}, completed the resolution of Rota's question, showing the ratio of the size of the largest antichain in $\Pi_n$ and the largest Stirling number of the second kind (the rank sizes in $\Pi_n$) is $\Omega \left( n^\alpha \left( \ln n \right)^{-\alpha - \frac{1}{4}} \right)$, where $\alpha = \frac{2 - \ln 2}{4} \simeq \frac{1}{35}$. So, the ratio does go to infinity, but very slowly. In 1999, this result was designated one of ten outstanding results in order theory by the editor-in-chief of the journer \emph{Order}.

One of the natural questions that arises from Rota's conjecture is: what happens if we look at $S_n$, ordered by refinement? Given $\pi \in S_n$, we say that $\sigma$ is a \emph{refinement} of $\pi$ if we can take one of the cycles of $\pi$ and slice it into two. More formally, if $\pi = \pi_1 \pi_2 \cdots \pi_k$ is the cycle decomposition of $\pi$, for any two elements $i$ and $j$ on a cycle $\pi_m$, $\pi \cdot (i \, j)$ is a refinement of $\pi$. In this paper, we will take a category theoretical approach to show that $S_n$, ordered by refinement, is Sperner.

It is also worth mentioning another partial order, called absolute order, on $S_n$. The \emph{absolute length} of $\pi \in S_n$ is defined by
\[
	l_T(\pi) = n - \text{the number of cycles in $\pi$}.
\]
Then, the \emph{absolute order} on $S_n$ is defined by
\[
	\pi \leq_T \sigma \iff l_T(\sigma) = l_T(\pi) + l_T(\pi^{-1}\sigma).
\]
Armstrong, in~\cite{Armstrong}, showed that the absolute order is the reverse of refinement, and so, the main result of this paper implies that $S_n$, ordered by absolute order, is Sperner.

\section{Flow morphisms}

In this section, we establish the groundwork to introduce the category $FLOW$. The objects of $FLOW$ are networks in the sense of Ford-Fulkerson~\cite{FordFulkerson}, and its morphisms preserve the Ford-Fulkerson flows (both underflows and overflows) on those networks.

A \emph{network} $N$ consists of an acyclic directed graph $G = (V,E)$ and a \emph{capacity function} $\nu: V \rightarrow \mathbb{R}^+$. For an edge $e \in E$, let $\partial_-(e)$ and $\partial_+(e)$ denote the head and tail of $e$, respectively. $V$ is partitioned into three sets, $R$, $S$, and $T$:
\begin{eqnarray*}
	S &= &\left\{ s \in V : \nexists e \in E, \partial_+(e) = s \right\}, \text{ called \emph{sources,}} \\
	T &= &\left\{ t \in V : \nexists e \in E, \partial_+(e) = t \right\}, \text{ called \emph{sinks, and}} \\
	R &= &V-S-T \text{, called \emph{intermediate vertices.}}
\end{eqnarray*}
An \emph{underflow} on $N$ is a function $f: E \rightarrow \mathbb{R}^+$ such that
\begin{itemize}
	\item for all $s \in S$, $\sum_{\partial_-(e) = s} f(e) \leq \nu(s)$,
	\item for all $t \in T$, $\sum_{\partial_+(e) = t} f(e) \leq \nu(t)$, and
	\item for all $r \in R$, $\sum_{\partial_-(e) = r} f(e) = \sum_{\partial_+(e) = r} f(e) \leq \nu(r)$.
\end{itemize}

An \emph{overflow} on $N$ is defined in the same way except that the inequalities are reversed. The quantity $net(f) = \sum_{s \in S} \sum_{\partial_-(e) = s} f(e)$ is the \emph{net S-T} flow of $f$, and the \emph{MaxFlow} of $N$ is defined as $MaxFlow(N) = \max_f net(f)$ over all underflows, $f$, on $N$. Similarly, \emph{MinFlow} of $N$ is defined as $MinFlow(N) = \min_f net(f)$ over all overflows, $f$, on $N$. By Ford-Fulkerson theory~\cite{FordFulkerson}, $MaxFlow(N) = MinCut(N)$, where a \emph{cut} is a set of vertices intersecting any path from a source to a sink. Also, $MinFlow(N) = MaxAntichain(N)$.

A \emph{bipartite network} is $V = S \cup T$ with all edges $e \in E$ directed from $S$ to $T$. A flow $f$ on a bipartite network $V = S \cup T$ is said to be a \emph{normalized flow} if
\begin{align*}
	\sum_{xy \in E} f(xy) = \frac{\omega(x)}{\omega(S)} &\qquad \text{for all $x \in S$, and} \\
	\sum_{xy \in E} f(xy) = \frac{\omega(y)}{\omega(T)} &\qquad \text{for all $y \in T$.}
\end{align*}
If $N$ is the Hasse diagram of a weighted and graded poset and every pair of consecutive ranks, $[N_k, N_{k+1}]$, accepts a normalized flow, then $N$ is said to have the \emph{normalized flow property (NFP)}.

For $G$ a bipartite graph with vertex sets $A$ and $B$, $G$ is said to satisfy Hall's matching condition, if for all $X \subseteq A$,
\[
	\lvert X \rvert \leq \lvert D(X) \rvert
\]
holds, where $D(X)$ is the set of vertices in $B$ connected to vertices in $X$. Sperner showed in his original problem that he only had to consider consecutive ranks at a time and if they satisfy Hall's condition, then the poset under consideration is Sperner.

When trying to prove Rota's conjecture, Graham and Harper came up with a strengthening of Hall's matching condition. A bipartite graph $G$ is said to satisfy \emph{normalized matching condition (NMC)} if for all $X \subseteq A$,
\[
	\frac{\lvert X \rvert}{\lvert A \rvert} \leq \frac{\lvert D(X) \rvert}{\lvert B \rvert}.
\]
The normalized matching condition is dual of the normalized flow property~\cite{FordFulkerson}. Harper has done extensive work in studying posets with NFP, and in~\cite{Harper}, he describes maps between these structures, called \emph{flow morphisms}. Let $M$ and $N$ be networks. Then, $\varphi: M \to N$ is a \emph{flow morphism} if
\begin{enumerate}
	\item $\varphi: G_M \to G_N$ is a graph epimorphism,
	\item $\varphi^{-1}(S_N) = S_M$ and $\varphi^{-1}(T_N) = T_M$,
	\item $\varphi$ is capacity preserving, i.e. for all $v \in N$, $\omega_M(\varphi^{-1}(v)) = \omega_N(v)$, and
	\item the preimage of every edge $e \in N$ has a normalized flow.
\end{enumerate}

This leads us to the category $FLOW$, whose objects are acyclic vertex-weighted networks and morphisms are precisely these flow morphisms. An important property of flow morphisms is that they preserve net $S$-$T$ flow, and so, $MaxFlow$ and $MinFlow$ problems on $M$ and $N$ are equivalent. In other words, if $M$ and $N$ are both in $FLOW$ and a flow morphism $\varphi$ exists between them, then the preimage of a maximum weight antichain of $N$ under $\varphi$ is a maximum weight antichain of $M$ (see~\cite{Harper} for a fuller discussion).

\section{$S_n$ is indeed Sperner}

In this section, we will prove that $S_n$ has normalized flow property which implies that $S_n$ is indeed Sperner.

\begin{theorem}
$S_n$ has normalized flow property.
\end{theorem}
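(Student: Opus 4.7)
The plan is to prove NFP of $S_n$ by reducing, through flow morphisms, to a dramatically smaller weighted poset on which NFP can be established inductively.

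\textbf{First reduction.} I would begin by exhibiting the flow morphism $\varphi \colon S_n \to \Pi_n^\omega$, where $\Pi_n^\omega$ is the set-partition lattice of $\{1,\dots,n\}$ under refinement, weighted by $\omega(P) = \prod_{B \in P} (|B|-1)!$. The map sends each $\pi \in S_n$ to its cycle partition $P(\pi)$. The weight is chosen precisely so that $\omega(P)$ equals $|\varphi^{-1}(P)|$, which gives capacity preservation; $\varphi$ is a rank-preserving graph epimorphism because a refinement $\pi \to \pi\cdot(i,j)$ splits a cycle of $\pi$ (equivalently, a block of $P(\pi)$). The normalized-flow axiom on each edge preimage is then automatic from the transitive conjugation action of $S_n$ on the fiber bipartite graph, which is therefore biregular and admits the uniform edge-flow as a normalized flow.

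\textbf{Second reduction.} I would then compose $\varphi$ with the natural flow morphism $\psi \colon \Pi_n^\omega \to \Lambda_n^\omega$ onto integer partitions of $n$ weighted by the conjugacy-class sizes $n!/z_\lambda$, sending a set-partition to its shape. Again the ambient $S_n$-symmetry furnishes biregular fibers and a uniform normalized flow. The composition $\psi \circ \varphi$ reduces NFP of $S_n$ to NFP of $\Lambda_n^\omega$, a weighted poset on integer partitions of $n$ whose ranks have sizes $\stirlingi{n}{k}$ (unsigned Stirling numbers of the first kind).

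\textbf{NFP of the target.} I would establish NFP for $\Lambda_n^\omega$ by induction on $n$, exploiting the Stirling recursion $\stirlingi{n}{k} = \stirlingi{n-1}{k-1} + (n-1)\stirlingi{n-1}{k}$ to stratify each rank of $\Lambda_n^\omega$ into partitions of $n$ containing a part of size $1$ (which, after removing that part, correspond to $\Lambda_{n-1}^\omega$ one rank lower) and partitions with all parts $\ge 2$ (which correspond to $\Lambda_{n-1}^\omega$ via decrementing a part). The aim is to set up an inductive flow morphism aligned with this stratification and close the loop via the inductive hypothesis.

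\textbf{Main obstacle.} The hardest step is this last one. The two-stratum decomposition is not an orthogonal product in $FLOW$, because refinement freely crosses the strata: splitting a part $k\ge 2$ into $(k-1)+1$ moves a partition from ``no $1$'s'' into ``with $1$''. Without the lavish $S_n$-symmetry that made the first two morphisms tautological, the bipartite graphs between adjacent ranks of $\Lambda_n^\omega$ are generally not biregular. One must verify the weighted normalized matching condition for subsets $X \subseteq \Lambda_n^\omega$ spanning both strata by careful combinatorial bookkeeping, matching the weights $n!/z_\lambda$ against the Stirling recursion; this is where the bulk of the technical work lies.
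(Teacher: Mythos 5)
Your first reduction already fails, and for a structural rather than a fixable reason: the cycle-partition map $\varphi\colon S_n \to \Pi_n^\omega$ is not a flow morphism. The symmetry you invoke is conjugation by all of $S_n$, but that action does not stabilize a given edge $P' \lessdot P$ of $\Pi_n$; only the subgroup fixing both $P$ and $P'$ does, and it is far too small to force biregularity of the fiber. Concretely, take $n=4$, $P = \{\{1,2,3,4\}\}$, $P' = \{\{1,2\},\{3,4\}\}$. The fiber over $P$ is the six $4$-cycles, the fiber over $P'$ is the single permutation $(1\,2)(3\,4)$, and a $4$-cycle is joined to $(1\,2)(3\,4)$ only if slicing it along some chord produces the arcs $\{1,2\}$ and $\{3,4\}$. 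Written as a cyclic sequence $a,b,c,d$, a $4$-cycle admits exactly two two-arc slicings, $\{a,b\}|\{c,d\}$ and $\{b,c\}|\{a,d\}$, so it realizes only two of the three pair-partitions $12|34$, $13|24$, $14|23$; in particular $(1\,3\,2\,4)$ and $(1\,4\,2\,3)$ have degree $0$ in this fiber. A normalized flow must assign each top vertex $x$ total flow $\omega(x)/\omega(\varphi^{-1}(P))>0$, which is impossible at an isolated vertex, so condition (4) of the definition fails and nothing transfers from $\Pi_n^\omega$ (or from $\Lambda_n^\omega$) back to $S_n$. On top of this, you concede that the step everything is reduced to --- NFP for $\Lambda_n^\omega$ --- is left unproved, so even granting the reductions the argument is incomplete.

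The paper avoids $\Pi_n$ entirely. It inducts on $n$ inside $S_{n+1}$ directly, splitting permutations by the value $\pi(n+1)$: this realizes the recurrence $s_{n+1,k} = n s_{n,k} + s_{n,k-1}$ as a decomposition of $S_{n+1}$ into $n$ rank-preserving copies of $S_n$ plus one copy raised by one rank. The $n$ unraised copies are interchangeable and collapse via a flow morphism onto a single weighted copy, reducing NFP to a two-column network whose only nontrivial normalized-matching inequality is $s_{n,k-1}s_{n,k+1} \le s_{n,k}^2$, i.e.\ log-concavity of the Stirling numbers of the first kind, which is known. To repair your approach you would need either a correct replacement for the first morphism or a direct verification of normalized matching between consecutive ranks of $S_n$; as written, the proposal does not establish the theorem.
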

\begin{proof}
We proceed by induction on $n$. The base case is trivial. As for the inductive step, let us assume that $S_n$ has normalized flow property.

The rank-weights of $S_n$, $\lvert S_{n,k} \rvert = s_{n,k}$, the Stirling numbers of the first kind, satisfy the recurrence relation
\[
	s_{n+1,k} = ns_{n,k} + s_{n,k-1}.
\]
Before continuing with the proof, we give an example of using this recurrence relation to view $S_4$ as four copies of $S_3$:

\begin{figure}[H]
\begin{center}
\scalebox{0.5}{
\begin{tikzpicture}
	\node (1a) at (4,0) {$(1\, 4\, 2\, 3)$};
	\node (1b) at (6,0) {$(1\, 4\, 3\, 2)$};
	\node (1c) at (12,0) {$(2\, 4\, 1\, 3)$};
	\node (1d) at (10,0) {$(2\, 4\, 3\, 1)$};
	\node (1e) at (16,0) {$(3\, 4\, 1\, 2)$};
	\node (1f) at (18,0) {$(3\, 4\, 2\, 1)$};
	\node (2b') at (4,6) {$(1\, 4\, 3)(2)$};
	\node (2b) at (6,6) {$(1\, 4\, 2)(3)$};
	\node (2a') at (2,6) {$(1\, 4)(2\, 3)$};
	\node (2d) at (8,6) {$(2\, 4\, 3)(1)$};
	\node (2e) at (12,6) {$(2\, 4\, 1)(3)$};
	\node (2e') at (10,6) {$(2\, 4)(1\, 3)$};
	\node (2g) at (14,6) {$(3\, 4\, 2)(1)$};
	\node (2h) at (16,6) {$(3\, 4\, 1)(2)$};
	\node (2i) at (18,6) {$(3\, 4)(1\, 2)$};
	\node (2j) at (22,6) {$(1\, 2\, 3)(4)$};
	\node (2k) at (24,6) {$(1\, 3\, 2)(4)$};
	\node (3a) at (2,12) {$(1\, 4)(2)(3)$};
	\node (3b) at (8,12) {$(2\, 4)(1)(3)$};
	\node (3c) at (14,12) {$(3\, 4)(1)(2)$};
	\node (3d) at (20,12) {$(2\, 3)(1)(4)$};
	\node (3e) at (22,12) {$(1\, 3)(2)(4)$};
	\node (3f) at (24,12) {$(1\, 2)(3)(4)$};
	\node (4) at (20,18) {$(1)(2)(3)(4)$};
	
	\draw[blue] (1a) -- (2b') (1a) -- (2b) (1a) -- (2a');
	\draw[blue] (1b) -- (2b') (1b) -- (2b) (1b) -- (2a');
	\draw[blue] (2b') -- (3a) (2b) -- (3a) (2a') -- (3a);
	
	\draw[blue] (1c) -- (2d) (1c) -- (2e) (1c) -- (2e');
	\draw[blue] (1d) -- (2d) (1d) -- (2e) (1d) -- (2e');
	\draw[blue] (2d) -- (3b) (2e) -- (3b) (2e') -- (3b);
	
	\draw[blue] (1e) -- (2g) (1e) -- (2h) (1e) -- (2i);
	\draw[blue] (1f) -- (2g) (1f) -- (2h) (1f) -- (2i);
	\draw[blue] (2g) -- (3c) (2h) -- (3c) (2i) -- (3c);
	
	\draw[blue] (2j) -- (3d) (2j) -- (3e) (2j) -- (3f);
	\draw[blue] (2k) -- (3d) (2k) -- (3e) (2k) -- (3f);
	\draw[blue] (3d) -- (4) (3e) -- (4) (3f) -- (4);
	
	\draw[red] (3a) -- (4) (3b) -- (4) (3c) -- (4);
	\draw[red] (2a') -- (3d) (2d) -- (3d) (2g) -- (3d);
	\draw[red] (2b') -- (3e) (2e') -- (3e) (2h) -- (3e);
	\draw[red] (2b) -- (3f) (2e) -- (3f) (2i) -- (3f);
	\draw[red] (1a) -- (2j) (1d) -- (2j) (1e) -- (2j);
	\draw[red] (1b) -- (2k) (1c) -- (2k) (1f) -- (2k);
	
	\draw[dashed, gray] (1a) -- (2g) (1b) -- (2d) (1c) -- (2h) (1d) -- (2b') (1e) -- (2e) (1f) -- (2b);
	\draw[dashed, gray] (2b') -- (3c) (2b) -- (3b) (2d) -- (3c) (2e) -- (3a) (2g) -- (3b) (2h) -- (3a);
	
\end{tikzpicture}
}
\end{center}
\caption{Viewing $S_4$ as four copies of $S_3$}
\end{figure}
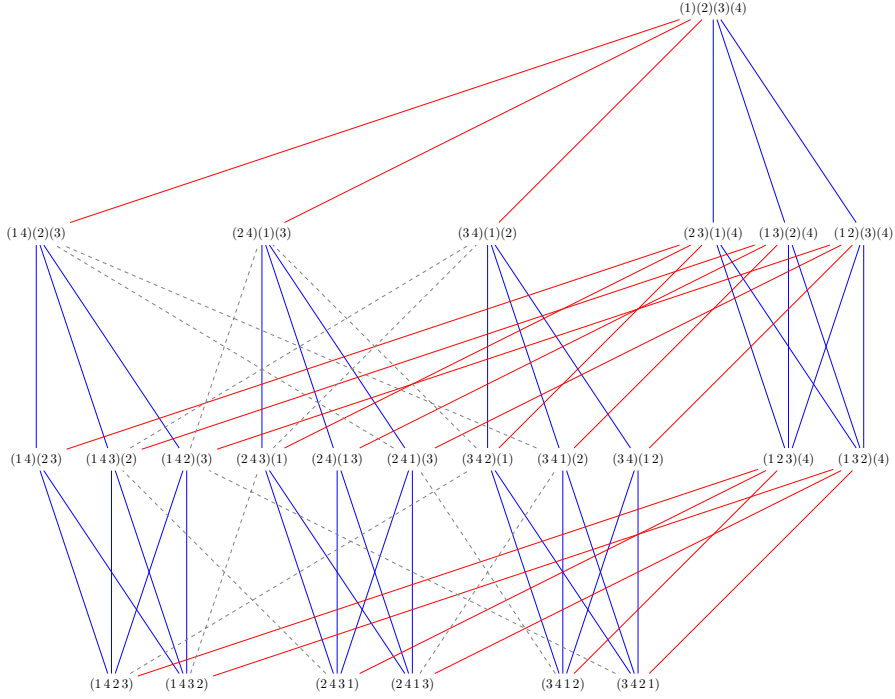

The copies of $S_3$ are arranged in a way that the first (blue) copy of $S_3$ has the six permutations $\pi$ with $\pi(1) = 4$, the second copy has $\pi$ with $\pi(2)=4$, the third copy has $\pi$ with $\pi(3) = 4$, and the fourth raised copy has $\pi$ with $\pi(4) = 4$. The red edges connect permutations from the raised copy to permutations of other copies, and the gray, dashed edges connect permutations from the lower copies to other lower copies.

A direct combinatorial proof of the recurrence follows from the observation that for $\pi \in S_{n+1,k}$, there are two possibilities:
\begin{enumerate}
	\item In the case that $\pi(n+1) = n+1$, we can remove $n+1$ from $\pi$ and have $\pi' \in S_{n,k-1}$. Conversely, adding a 1-cycle with $n+1$ to $\pi' \in S_{n,k-1}$ will give $\pi \in S_{n+1,k}$.
	\item In the case that $\pi(n+1) = i$, where $1 \leq i \leq n$, we can remove $n+1$ from the cycle containing $\pi(n+1) = i$ and define $\pi'(\pi^{-1}(n+1)) = i$, which will give $n$~copies, $S_{n,k}^{(i)}$ for $1 \leq i \leq n$. Conversely, the operation of defining $\pi \in S_{n+1,k}$ from $\pi' \in S_{n,k}$ can be done similarly.
\end{enumerate}
There is exactly one map $\pi \mapsto \pi'$ between the copy labeled $i$ and $n+1$, by construction, and the figure below is provided to help the reader visualize.

\begin{figure}[H]
\centering
\begin{tikzpicture}[scale=1.2]
	\draw [blue] plot [smooth cycle] coordinates {(0,0) (-1.2,2) (-2,4) (-0.8,2)};
	\draw [blue] plot [smooth cycle] coordinates {(1,0) (-0.2,2) (-1,4) (0.2,2)};
	\draw [blue] plot [smooth cycle] coordinates {(3,0) (1.8,2) (1,4) (2.2,2)};
	\draw [blue] plot [smooth cycle] coordinates {(4,0.5) (2.8,2.5) (2,4.5) (3.2,2.5)};
	\draw [blue] (1.2,0.5) node {.};
	\draw [blue] (1.7,0.5) node {.};
	\draw [blue] (2.2,0.5) node {.};
	\draw [black] (-1.2,2) -- (-0.8,2);
	\draw [black] (-0.2,2) -- (0.2,2);
	\draw [black] (1.8,2) -- (2.2,2);
	\draw [black] (2.8,2.5) -- (3.2,2.5);
	\draw [black] (-0.94,1.5) -- (-0.58,1.5);
	\draw [black] (0.06,1.5) -- (0.42,1.5);
	\draw [black] (2.06,1.5) -- (2.42,1.5);
	\draw [black] (3.06,2) -- (3.42,2);
	\draw [black] (-0.64,1) -- (-0.35,1);
	\draw [black] (0.36,1) -- (0.65,1);
	\draw [black] (2.36,1) -- (2.65,1);
	\draw [black] (3.36,1.5) -- (3.65,1.5);
	
	\draw [red] (-0.5,1) -- (3.5,1.5);
	\draw [red] (0.5,1) -- (3.5,1.5);
	\draw [red] (2.5,1) -- (3.5,1.5);
	
	\draw [red] (-0.76,1.5) -- (3.24,2);
	\draw [red] (0.24,1.5) -- (3.24,2);
	\draw [red] (2.24,1.5) -- (3.24,2);
	
	\draw [red] (-1,2) -- (3,2.5);
	\draw [red] (0,2) -- (3,2.5);
	\draw [red] (2,2) -- (3,2.5);
	
	\coordinate[label=below:$1$] (1) at (0,0);
	\coordinate[label=below:$2$] (2) at (1,0);
	\coordinate[label=below:$n$] (n) at (3,0);
	\coordinate[label=below:{$n+1$}] (n+1) at (4,0.5);
	
	\coordinate[label=left:$k$] (k) at (-0.94,1.5);
	\coordinate[label=left:{$k+1$}] (k+1) at (-1.2,2);
\end{tikzpicture}
\caption{Viewing $S_{n+1}$ in light of the recurrence relation}
\end{figure}
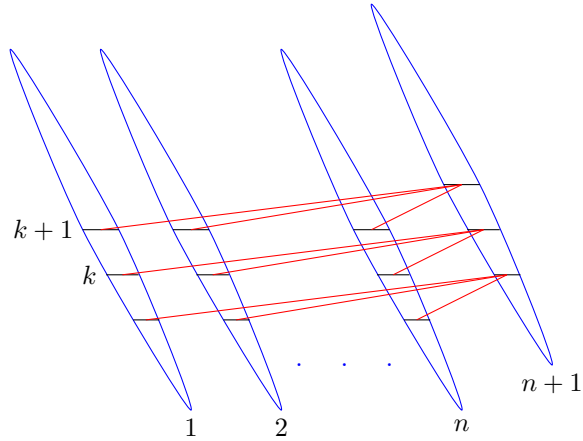

By the inductive hypothesis and the regularity between the $n$ blue copies of $S_n$, we can collapse the $n$ copies as in the figure below, where the collapsed copy is in bold.

\begin{figure}[H]
\centering
\begin{tikzpicture}
	\draw [blue, ultra thick] (0,0) -- (-2,4);
	\draw [blue] (1.5,0.5) -- (-0.5,4.5);
	
	\draw [red] (0,0) -- (1.5,0.5);
	\draw [red] (-0.25,0.5) -- (1.25,1);
	\draw [red] (-0.5,1) -- (1,1.5);
	\draw [red] (-0.75,1.5) -- (0.75,2);
	\draw [red] (-1,2) -- (0.5,2.5);
	\draw [red] (-1.25,2.5) -- (0.25,3);
	\draw [red] (-1.5,3) -- (0,3.5);
	\draw [red] (-1.75,3.5) -- (-0.25,4);
	\draw [red] (-2,4) -- (-0.5,4.5);
	
	\coordinate[label=below:{$[n]$}] (n) at (0,0);
	\coordinate[label=below:{$n+1$}] (n+1) at (1.5,0.5);
	\coordinate[label=left:$k$] (k) at (-1,2);
	\coordinate[label=left:{$k+1$}] (k+1) at (-1.25,2.5);
\end{tikzpicture}
\caption{``Collapsing'' the $n$ copies of $S_n$}
\end{figure}
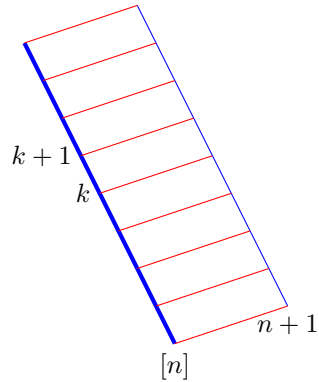

We claim that this new network satisfies the normalized matching condition. To show this, we consider the two consecutive ranks $k$ and $k+1$, which are shown with the corresponding vertex-weights:
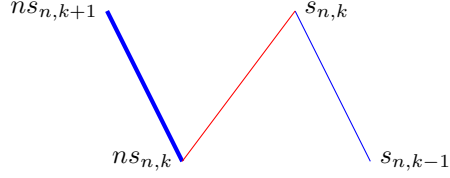
\begin{figure}[H]
\centering
\begin{tikzpicture}
	\draw [blue,ultra thick] (0,0) -- (-1,2);
	\draw [blue] (2.5,0) -- (1.5,2);
	\draw [red] (0,0) -- (1.5,2);
	
	\coordinate[label=left:{$ns_{n,k+1}$}] (A) at (-1,2);
	\coordinate[label=left:{$ns_{n,k}$}] (B) at (0,0);
	\coordinate[label=right:{$s_{n,k}$}] (C) at (1.5,2);
	\coordinate[label=right:{$s_{n,k-1}$}] (D) at (2.5,0);
\end{tikzpicture}
\caption{Two consecutive ranks $k$ and $k+1$}
\end{figure}

The only non-trivial equivalence class to show the normalized matching condition for is the class with the lower, right vertex. In other words, we need to show that
\[
	\frac{s_{n,k-1}}{s_{n,k-1} + ns_{n,k}} \leq \frac{s_{n,k}}{s_{n,k} + ns_{n,k+1}}.
\]
This is equivalent to $s_{n,k-1} s_{n,k+1} \leq s_{n,k}^2$, which is true due to the 2-positivity of $s_{n,k}$'s, which was proved in~\cite{Harper2}. Hence, NMC is satisfied, which in turn implies that $S_n$ satifies NMC, and so, has normalized flow property.
\end{proof}
\emph{Remark.} The lattice in Figure 3 is
\begin{center}
	\begin{tikzpicture}
		\coordinate[label=right:$1$] (1) at (0,1);
		\coordinate[label=right:$n$] (n) at (0,0);
		\draw (1) -- (n);
		\fill (1) circle[radius=2pt];
		\fill (n) circle[radius=2pt];
		\coordinate[label=left:{$S_n \quad \times$}] (Sn) at (-0.5,0.5);
	\end{tikzpicture}
\end{center}
which has NFP by the Product theorem~\cite{Harper}. Our proof actually shows that
\begin{center}
	\begin{tikzpicture}
		\coordinate[label=below:$1$] (a1) at (0,0);
		\fill (a1) circle[radius=2pt];
		
		\coordinate[label=right:{$\times$}] (times1) at (0.2,0.5);
		
		\coordinate[label=below:$1$] (b1) at (1,0);
		\coordinate[label=above:$2$] (b2) at (1,1);
		\draw (b1) -- (b2);
		\fill (b1) circle[radius=2pt];
		\fill (b2) circle[radius=2pt];
		
		\coordinate[label=right:{$\times$}] (times2) at (1.2,0.5);
		
		\coordinate[label=below:$1$] (c1) at (2,0);
		\coordinate[label=below:$2$] (c2) at (3,0);
		\coordinate[label=above:$3$] (c3) at (2.5,1);
		\draw (c1) -- (c3) -- (c2);
		\fill (c1) circle[radius=2pt];
		\fill (c2) circle[radius=2pt];
		\fill (c3) circle[radius=2pt];
		
		\coordinate[label=right:{$\times \cdots \times$}] (times3) at (3.2,0.5);
		
		\coordinate[label=below:$1$] (d1) at (5,0);
		\coordinate[label=below:$2$] (d2) at (6,0);
		\coordinate[label=below:$\cdots$] (dd) at (7,0);
		\coordinate[label=below:{$n-1$}] (dn-1) at (8,0);
		\coordinate[label=above:$n$] (dn) at (6.5,1);
		\draw (d1) -- (dn) -- (d2);
		\draw (dn-1) -- (dn);
		\fill (d1) circle[radius=2pt];
		\fill (d2) circle[radius=2pt];
		\fill (dn) circle[radius=2pt];
		\fill (dn-1) circle[radius=2pt];
		
		\coordinate[label=right:{$\subseteq S_n$.}] (Sn) at (9,0.5);
	\end{tikzpicture}
\end{center}
Since the former has NFP by the Product theorem, the latter has NFP also.

Now that we have shown that $S_n$ has normalized flow property, we want to find a network we can map $S_n$ to, via a flow morphism, which is Sperner. In fact, we can collapse the network in Figure 3 further, just by keeping the same rank:

\begin{figure}[H]
\centering
\begin{tikzpicture}
	\draw [blue, ultra thick] (0,0) -- (-2,4);
	\draw [blue] (1.5,0.5) -- (-0.5,4.5);
	
	\draw [red] (0,0) -- (1.5,0.5);
	\draw [red] (-0.25,0.5) -- (1.25,1);
	\draw [red] (-0.5,1) -- (1,1.5);
	\draw [red] (-0.75,1.5) -- (0.75,2);
	\draw [red] (-1,2) -- (0.5,2.5);
	\draw [red] (-1.25,2.5) -- (0.25,3);
	\draw [red] (-1.5,3) -- (0,3.5);
	\draw [red] (-1.75,3.5) -- (-0.25,4);
	\draw [red] (-2,4) -- (-0.5,4.5);
	
	\coordinate[label=below:{$[n]$}] (n) at (0,0);
	\coordinate[label=below:{$n+1$}] (n+1) at (1.5,0.5);
	\coordinate[label=left:$k$] (k) at (-1,2);
	\coordinate[label=left:{$k+1$}] (k+1) at (-1.25,2.5);
	
	\draw (1.5,2) -- (3,2);
	\draw (2.85,2.15) -- (3,2) -- (2.85,1.85);
	
	\draw [blue,ultra thick] (5.25,0) -- (3,4.5);
\end{tikzpicture}
\caption{``Collapsing'' the $n$ copies of $S_n$}
\end{figure}
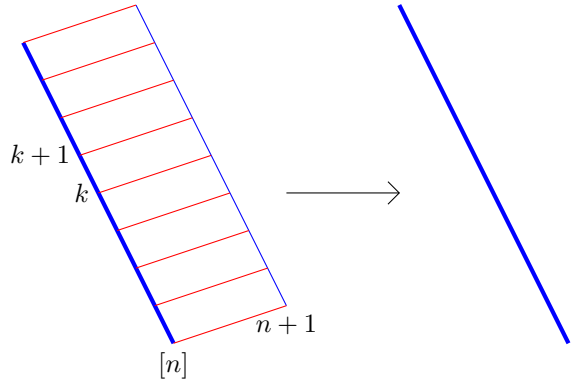

Since the resulting network is a totally ordered set, the largest antichain is going to be the rank/vertex, say $v$, with the largest vertex weight. The composition of the collapsings is a flow morphism, and so, the preimage of $v$ in $S_n$ will be the largest antichain. By construction, the preimage of each vertex in the totally ordered network is a rank in $S_n$, and so, the largest antichain in $S_n$ is the largest rank. Thus, $S_n$ is indeed Sperner.

\bibliographystyle{amsplain}

\end{document}